\author{Johan Andersson\textsuperscript{*}\thanks{\textsuperscript{*}Email:johan.andersson@oru.se \, Address:Department of Mathematics, School of Science and Technology, {\"O}rebro University, {\"O}rebro, SE-701 82 Sweden. }}
\theoremstyle{plain}
\newtheorem{thm}{Theorem}
\newtheorem{lem}{Lemma}
\theoremstyle{definition}
\newtheorem{ack}{Acknowledgements}
\date{}
\def\cprime{$'$}
\newcommand{\R}{{\mathbb R}} 
\newcommand{\abs}[1]{{\left| {#1} \right|}} \newcommand{\p}[1]{{\left(
      {#1} \right)}}
\renewcommand{\Re}{\operatorname{Re}}
 \newcommand{\Oh}[1]{{O \p{#1}}}
\begin{document}

\title{On the growth of the $L^p$ norm of the Riemann zeta-function on the line $\Re(s)=1$}

\maketitle

\begin{abstract}
We prove that if $\delta>0$ and $p$ is real then
$$\sup_T \int_T^{T+\delta} \abs{\zeta(1+it)}^p dt <\infty,$$
if and only if $-1<p<1$. Furthermore,  we show the omega estimates
\begin{gather*}
  \int_T^{T+\delta} \abs{\zeta(1+it)}^{\pm 1} dt = \Omega(\log \log \log T),  \\  \int_T^{T+\delta}  \abs{\zeta(1+it)}^{\pm p} dt = \Omega((\log \log T)^{p-1}), \qquad (p>1)
\end{gather*}
which with the exception of an additional $\log \log \log T$ factor in the second estimate coincides with conditional (under the Riemann hypothesis) order estimates. 
We also prove weaker unconditional order estimates.
\end{abstract}

\tableofcontents

\section{Classical order and omega estimates}
The study of the Riemann zeta-function  on the line $\Re(s)=1$ has been studied by a lot of authors, starting with the work of  Hadamard \cite{Hadamard} and de la Vall{\'e}e-Poussin \cite{Poussin} who proved that $\zeta(1+it) \neq 0$ which implies the prime number theorem.  Assuming the Riemann hypothesis,  Littlewood \cite{Littlewood} showed that
\begin{gather} \label{A1}
 \zeta(1+it) \ll \log \log t, \qquad  \zeta(1+it)^{-1} \ll \log \log t. \\ \intertext{Bohr and Landau \cite{BohrLandau1,BohrLandau2,BohrLandau3} proved the corresponding omega-estimates} \label{A2}
 \zeta(1+it)=\Omega(\log \log t), \qquad  \zeta(1+it)^{-1}=\Omega(\log \log t),
\end{gather}
unconditionally, so Littlewood's conditional bound is the best possible. The best unconditional bound are the estimates
\begin{gather} \label{A3}
 \zeta(1+it) \ll (\log t)^{2/3}, \qquad  \zeta(1+it)^{-1} \ll ( \log t)^{2/3} (\log \log t)^{1/3},
\end{gather}
of Vinogradov \cite{Vinogradov} and Korobov \cite{Korobov}. For a discussion of these results as well as the current record, see the recent paper  of Granville-Soundararajan \cite{Sound}.

\section{The $L^p$ norm in short intervals}
\subsection{Bounds from below}
A related question which has been less studied is the question of the $p$'th moment of the Riemann zeta-function in short intervals. What can we say about
\begin{gather} \label{star}
 \int_T^{T+\delta} \abs{\zeta(1+it)}^p dt?
\end{gather}
One of our recent results \cite[Theorem 7]{Andersson1} is the following.
\begin{thm}   We have the following estimates for the $L^p$ norm, for $p>0$ of the zeta-function and its inverse in short intervals: 
   \begin{align*}
     (i)& \qquad   \inf_{T}  \p{\frac 1 {\delta} \int_T^{T+\delta} \abs{\zeta(1+it)}^p dt}^{1/p} = \frac{\pi^2 e^{-\gamma}}{24} \delta+\Oh{\delta^3}, \\
     (ii)& \qquad  \inf_{T}   \p{\frac 1 \delta \int_T^{T+\delta} \abs{\zeta(1+it)}^{-p} dt}^{1/p} = \frac{e^{-\gamma}} 4 \delta+\Oh{\delta^3}, 
\end{align*}
 for $\delta>0$. Furthermore, both estimates are valid if $\displaystyle \inf_T$ is replaced by $\displaystyle \liminf_{T \to \infty}$, and if $1+it$ is replaced by $\sigma+it$ and the infimum is also taken over $\sigma>1$.
\end{thm}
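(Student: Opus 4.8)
The plan is to reduce both parts to a single extremal problem for Euler products, to prove the upper bound by an explicit choice of $T$, and to obtain the matching lower bound from a Jensen-type inequality sharpened using the pole of $\zeta$ at $s=1$.

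\emph{Reduction.} For $\sigma>1$ the product $\zeta(\sigma+it)=\prod_p(1-p^{-\sigma-it})^{-1}$ converges absolutely, and by Kronecker's theorem the tuples $(p^{-iT})_{p\le P}$ equidistribute on the torus as $T\to\infty$. Hence $\inf_T$ coincides with $\liminf_{T\to\infty}$, and $\inf_T\tfrac1\delta\int_T^{T+\delta}\abs{\zeta(\sigma+it)}^{\pm p}\,dt$ equals the infimum over all phase tuples $(\theta_p)$ of $\tfrac1\delta\int_0^\delta\prod_p\abs{1-p^{-\sigma}e^{-i(\theta_p+v\log p)}}^{\mp p}\,dv$. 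A continuity step — comparing $\zeta(1+it)$ with $\zeta(\sigma+it)$ as $\sigma\to1^+$, using that the integrand varies slowly away from the pole — shows the value at $\sigma=1$ already equals the infimum over $\sigma\ge1$, so it suffices to analyse this one optimisation.

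\emph{Upper bound.} I would fix a threshold $Y$ with $\log Y$ of order $1/\delta$ and, via Kronecker, pick $T$ so that $(T+\tfrac\delta2)\log p$ is close to $0$ (case $(ii)$) or to $\pi$ (case $(i)$) modulo $2\pi$ for every $p\le Y$, with $\sigma$ slightly above $1$ so that $\prod_{p>Y}$ is uniformly $1+o(1)$. Then $\abs{\zeta\bigl(1+i(T+\tfrac\delta2+v)\bigr)}$ agrees up to $1+o(1)$ with $\prod_{p\le Y}\abs{1-p^{-1}e^{-iv\log p}}^{-1}$ in case $(ii)$ and with $\prod_{p\le Y}\abs{1+p^{-1}e^{-iv\log p}}^{-1}$ in case $(i)$, whose values at $v=0$ are $\prod_{p\le Y}(1-1/p)^{-1}$ and $\prod_{p\le Y}(1+1/p)^{-1}$. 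Using the identity $\log\abs{1-p^{-1}e^{-i\psi}}-\log(1-1/p)=\sum_{k\ge1}\tfrac{p^{-k}}{k}(1-\cos k\psi)$ to control the $v$-dependence, together with the Mertens estimates $\prod_{p\le Y}(1-1/p)\sim e^{-\gamma}/\log Y$ and $\prod_{p\le Y}(1+1/p)^{-1}\sim\pi^2e^{-\gamma}/(6\log Y)$, one computes $\tfrac1\delta\int_T^{T+\delta}\abs{\zeta(1+it)}^{\mp p}\,dt$; after rescaling so that $v\log Y=O(1)$ the factor $\delta$ comes out and there remains a $\delta$-free minimisation over a single bounded parameter (essentially $\delta\log Y$). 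Carrying this out produces the constants $\tfrac{e^{-\gamma}}4$ and $\tfrac{\pi^2e^{-\gamma}}{24}$, which differ exactly by $\zeta(2)=\pi^2/6$ as dictated by the two Mertens products; the remainder is $\Oh{\delta^3}$ because the extremal profile is even about the centre of the interval and the next correction is governed by $\zeta(s)=\tfrac1{s-1}+\gamma+\Oh{s-1}$.

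\emph{Lower bound.} By convexity of $\exp$, $\bigl(\tfrac1\delta\int_T^{T+\delta}\abs{\zeta(1+it)}^{\pm p}\,dt\bigr)^{1/p}\ge\exp\bigl(\pm\tfrac1\delta\int_T^{T+\delta}\log\abs{\zeta(1+it)}\,dt\bigr)$ for every $p>0$, so it suffices to bound $\tfrac1\delta\int_T^{T+\delta}\log\abs{\zeta(1+it)}\,dt$. Expanding $\log\abs{\zeta(1+it)}=\sum_{p,k}\tfrac{p^{-k}}{k}\cos(kt\log p)$ and using $\tfrac1\delta\int_T^{T+\delta}\cos(kt\log p)\,dt=\tfrac{\sin(k(T+\delta)\log p)-\sin(kT\log p)}{k\delta\log p}$, whose modulus is at most $\min(1,\tfrac2{k\delta\log p})$, yields bounds of the right order. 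The difficulty is that these are not sharp: the extremal $\abs{\zeta}$ on a short interval is genuinely nonconstant, so Jensen loses a constant factor, and to recover the exact constants one must argue more carefully — the phases $k(T+\tfrac\delta2)\log p$ cannot all be extremised at once, so the cosine averages cannot all attain $\pm\min(1,\tfrac2{k\delta\log p})$ simultaneously, and one matches, prime by prime, against the extremal configuration of the construction. Showing that no admissible $T$ — equivalently no phase tuple and no $\sigma\ge1$ — can beat the construction is the main obstacle. Finally, reading the conclusion back through the reduction step gives all the stated forms, including the invariance under replacing $\inf_T$ by $\liminf_{T\to\infty}$ and under enlarging the range of $\sigma$.
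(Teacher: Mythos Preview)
First, a bibliographic point: this theorem is not proved in the present paper at all. It is quoted as \cite[Theorem 7]{Andersson1} and used only as background; there is no proof here to compare your attempt against. So any assessment of your argument has to stand on its own.

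On the substance: your reduction step via Kronecker and absolute convergence for $\sigma>1$ is sound, and the passage to $\sigma=1$ by continuity is plausible though it needs care near the pole. The upper-bound construction is in the right spirit, but as written it is not a proof: you assert that ``carrying this out produces the constants $\tfrac{e^{-\gamma}}{4}$ and $\tfrac{\pi^2 e^{-\gamma}}{24}$'' without performing the minimisation over the parameter $\delta\log Y$, and the claimed $O(\delta^3)$ remainder is justified only by a parity remark. The factor-of-$\zeta(2)$ heuristic linking the two constants is correct but does not by itself pin either one down.

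The real gap is the lower bound, and you say so yourself: ``Showing that no admissible $T$ \ldots\ can beat the construction is the main obstacle.'' Jensen's inequality gives only
\[
\Bigl(\tfrac1\delta\int_T^{T+\delta}\abs{\zeta(1+it)}^{\pm p}\,dt\Bigr)^{1/p}\ge \exp\Bigl(\pm\tfrac1\delta\int_T^{T+\delta}\log\abs{\zeta(1+it)}\,dt\Bigr),
\]
and, as you note, equality fails because the extremiser is not constant on the interval; so this route cannot recover the sharp constants. Your proposed fix --- arguing that the cosine averages cannot all be simultaneously extremal --- is a description of the difficulty, not a mechanism for overcoming it. Without an argument that actually identifies the minimising phase configuration and shows it is optimal (for every $p>0$ at once, since the answer is claimed to be $p$-independent), the lower bound, and hence the theorem, remains unproved.
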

This result gives lower estimates for this integral. In particular it shows that the infimum  is strictly positive and thus gives an analogue of Hadamard and de la Vall\'ee Poussin's result for the non vanishing of the Riemann zeta-function on the line $\Re(s)=1$.

As discussed in \cite{Andersson1} this can be applied to the question of universality on the 1-line. It should be noted that in a surprise turn of events \cite{Andersson2} we recently managed to prove that a Voronin universality type result in fact do hold on the line Re$(s)=1$ if we in addition to vertical shift allow scaling in the argument and adding a positive constant in the range.

\subsection{Bounds from above}
Another question is whether we similarly as Littlewood's and Bohr's results can obtain omega, and order results for the quantity in \eqref{star}? This is the topic for the current paper.  The first question regarding this is whether for some $p>0$  this is bounded. This is answered in the following theorem
\begin{thm} \label{thm2}
  We have for real $p$ that 
   \begin{gather}
      \sup_T \int_T^{T+\delta} \abs{\zeta(1+it)}^p dt<\infty 
    \end{gather}
    if and only if $-1<p<1$. Furthermore $\displaystyle \sup_T$ can be replaced by $\displaystyle \lim \sup_T$ and the same result holds.
\end{thm}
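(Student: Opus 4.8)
The plan is to split into the "if" direction ($-1<p<1$ gives boundedness) and the "only if" direction ($|p|\ge 1$ gives unboundedness), and to separate further the behaviour of $\zeta$ near its large values from the behaviour near its small values (which, on $\Re(s)=1$, correspond to $t$ near the putative "almost-zeros" produced by Diophantine approximation of the primes).

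For the "if" direction, the key point is a local bound on the measure of the set where $\log|\zeta(1+it)|$ is large. Writing $\zeta(1+it)=\prod_p(1-p^{-1-it})^{-1}$ and truncating the Euler product / Dirichlet series, one shows that on any interval $[T,T+\delta]$ the distribution function of $\log|\zeta(1+it)|$ has Gaussian-type tails: roughly $\operatorname{meas}\{t\in[T,T+\delta]: |\log|\zeta(1+it)||\ge V\}\ll_\delta \exp(-cV^2/\log V)$ for $V$ up to about $\log\log T$, after which the tail is governed by the extreme values and is controlled by the bound $|\log\log\zeta|$ of size $\log\log\log T$ together with crude estimates (here one can invoke the $\Omega$-machinery of the paper in reverse, or a Vinogradov--Korobov style bound). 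Since such a sub-Gaussian tail makes $\int e^{pV}\,d(\text{tail})$ converge for \emph{every} real $p$ at the Gaussian range, the only threat to boundedness comes from the contribution of the exceptional set where $|\zeta(1+it)|$ is either very large or very small; there one uses the trivial bounds $\log|\zeta(1+it)|\ll\log\log\log T$ and the measure bound on that exceptional set. The upshot is that $\int_T^{T+\delta}|\zeta(1+it)|^p\,dt$ is bounded precisely when the large-value and small-value contributions are integrable, and a direct computation with the Euler product near a point $t_0$ where $p^{it_0}\approx -1$ (resp. $p^{it_0}\approx 1$) for all small primes $p$ simultaneously — which by Dirichlet's theorem on simultaneous approximation occurs for a set of $t$ of positive measure inside some interval of length $\delta$ arbitrarily far out — shows that $|\zeta(1+it)|$ behaves like $|t-t_0|^{-1}$ near such a $t_0$, whence $|\zeta(1+it)|^p$ is locally integrable iff $p<1$, and the reciprocal $|\zeta(1+it)|^{-p}$ is locally integrable iff $p>-1$.

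In more detail, for the "only if" direction with $p\ge 1$: one produces, for each large $N$, a point $t_0=t_0(N)$ and an interval around it such that $|\zeta(1+it)|\gg_N |t-t_0|^{-1}$ for $|t-t_0|$ in a range of length $\gg 1/\log\log N$ or so, by taking the product over primes $p\le N$ of $(1-p^{-1-it})^{-1}$, using simultaneous Diophantine approximation to make every factor close to its maximal modulus $(1-p^{-1})^{-1}\sim$ large for the block of primes near $N$, and using the standard resonance/near-pole analysis for the single factor whose argument is crossing. Then $\int |t-t_0|^{-p}\,dt$ over that range is $\gg$ (a positive power of $\log\log N$ when $p=1$, or blows up outright when $p>1$), and letting $N\to\infty$ with $t_0(N)\to\infty$ gives $\sup_T\int_T^{T+\delta}|\zeta|^p=\infty$; the same argument with $\zeta$ replaced by $1/\zeta$ and "$p^{it}\approx -1$" replaced by "$p^{it}\approx 1$" handles $p\le -1$. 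The replacement of $\sup_T$ by $\limsup_T$ is automatic because all the constructions place $t_0$ arbitrarily far to the right.

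The main obstacle I expect is the \emph{quantitative} large-deviation bound needed for the "if" direction: one needs that for \emph{all} real $p$ (not just a bounded range) the integral stays finite, which forces genuinely sub-Gaussian control on $\log|\zeta(1+it)|$ on a \emph{fixed short} interval $[T,T+\delta]$, uniformly in $T$ — the short length $\delta$ means one cannot simply quote the classical (length-$T$) moment results of Montgomery or Soundararajan, and one must instead run a Selberg-type central-limit-theorem argument locally, with explicit tail bounds and an honest treatment of the contribution beyond the Gaussian range. Once that tail bound is in hand with the correct shape, the equivalence with $-1<p<1$ falls out of the local Euler-product analysis described above, which is comparatively routine (it is essentially the computation already behind Theorem 1 of the excerpt, run near the opposite extreme of $\zeta$).
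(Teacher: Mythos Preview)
Your ``only if'' direction is on the right track and is close in spirit to the paper's Theorem~8: both use simultaneous Diophantine approximation to produce a point $t_0$ at which $P^{it_0}\approx 1$ (or $\approx -1$) for all primes $P\le N$, so that on a neighbourhood of $t_0$ the partial Euler product, and hence $\zeta(1+it)$ (or $1/\zeta(1+it)$), mimics $\zeta(1+i(t-t_0))$ and inherits the blow-up of the pole at $s=1$. The paper packages this via a smoothed power $\zeta_{n,N}^p$ and then invokes Dirichlet's theorem (and Bohr--Landau's effective Kronecker result for the negative-$p$ case), but the mechanism is the same as yours.

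The genuine gap is in your ``if'' direction. The sub-Gaussian tail bound you propose,
\[
\operatorname{meas}\bigl\{t\in[T,T+\delta]:\ \bigl|\log|\zeta(1+it)|\bigr|\ge V\bigr\}\ll_\delta \exp\!\bigl(-cV^2/\log V\bigr),
\]
cannot hold \emph{uniformly} in $T$: your own construction for the ``only if'' part shows that near the special $t_0$ one has $|\zeta(1+it)|\asymp |t-t_0|^{-1}$, so the tail there is only exponential, $\asymp e^{-V}$, not sub-Gaussian. You recognise this and fall back on the local model $|\zeta(1+it)|\sim|t-t_0|^{-1}$, but that model is a \emph{lower} bound (exactly what drives the divergence for $p\ge 1$); for boundedness when $0<p<1$ you would need the \emph{upper} bound $|\zeta(1+it)|\ll |t-t_0|^{-1}$ uniformly over all such $t_0$ and all $T$, together with a clean dichotomy between ``typical'' and ``exceptional'' intervals. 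Neither is supplied, and neither is easy: the only uniform pointwise bounds available ($|\zeta(1+it)|\ll(\log t)^{2/3}$ unconditionally, or $\ll\log\log t$ under RH) are far too weak to yield a bounded $p$th moment. You flag this yourself as ``the main obstacle'', and it is a real one.

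The paper sidesteps the obstacle entirely with a positivity trick you have not used. One replaces the sharp interval by the Fej\'er weight $\theta(x)=\max(1-|x|,0)$, whose Fourier transform is non-negative. Because the Dirichlet coefficients $d_p(n)$ of $\zeta(s)^p$ are non-negative for $p\ge 0$, the weighted integral
\[
\int_{T-\delta}^{T+\delta}|\zeta(\sigma+it)|^p\,(\delta-|t-T|)\,dt
=\delta^2\sum_{m,n}\frac{d_{p/2}(m)d_{p/2}(n)}{(mn)^\sigma}\Bigl(\tfrac{n}{m}\Bigr)^{iT}\hat\theta\!\Bigl(\tfrac{\delta}{2\pi}\log\tfrac{n}{m}\Bigr)
\]
is, by the triangle inequality, maximised termwise at $T=0$. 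Hence the supremum over $T$ equals the value at $T=0$, and letting $\sigma\to 1^+$ one is reduced to the single explicit integral $\int_{-\delta}^{\delta}|\zeta(1+it)|^p(1-|t|/\delta)\,dt$, which is finite precisely when $p<1$ thanks to the simple pole at $s=1$. For negative $p$ the same argument is run with $(\zeta(2s)/\zeta(s))^{|p|}$, whose coefficients are $\lambda(n)d_{|p|}(n)$ with $|\lambda(n)d_{|p|}(n)|=d_{|p|}(n)$, and one again lands on the integral near the pole. This gives the full ``if'' direction (indeed, the sharper Theorem~3) without any distributional input on $\log|\zeta|$ at all.
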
 
For $-1<p<1$ this implies similar results on the non universality of the Riemann zeta-function on the 1-line as Theorem 1 as we proved in \cite{Andersson1}. More precisely Theorem \ref{thm2} for $0<p<1$ gives an upper bound $M$ for the $L^p$-norm of the functions $\zeta_T(t):=\zeta(1+iT+it)$ on the interval $[0,\delta]$. 
 Thus the zeta-function can not approximate any  function $f$ with $L^p$ norm strictly greater than $M$. Thus this gives another proof of the fact that the usual Voronin universality theorem does not extend to the line $\Re(s)=1$.

It is reasonable to expect that $T=-\delta/2$ should maximize this integral for $p>0$. It is clear that this would imply Theorem 1 for positive values of $p$, since the integral with this value of $T$ is divergent exactly when 
  $p \geq 1$. We will not prove this, but rather leave it as an open problem. However we will manage to prove the corresponding result for the following related integral
  \begin{gather}
      \sup_T \int \abs{\zeta(1+it)}^p \theta\p{\frac {t-T} \delta} dt 
    \end{gather}
whenever the Fourier transform 
 \begin{gather} \label{ft} \hat \theta(\tau)= \int_{-\infty}^\infty e^{- 2 \pi i \tau x} \theta(x)dx \end{gather}
is non negative. For the purpose of this paper we will choose the triangular function
\begin{gather} \label{thetaref}
   \theta(x)=\begin{cases} 1-|x|, & |x| \leq 1,\\ 0, & |x|>1.\end{cases}
\end{gather}
For this integral kernal it is well known that its Fourier transform
\begin{gather} \label{ft2}
 \hat \theta(\tau)= \frac{(\sin  \pi \tau)^2}{\pi^2 \tau ^2}
\end{gather}
is non negative\footnote{This is essentially the Fourier transform of the Fej\'er kernel}.

\subsubsection{Integral kernals with non negative Fourier transforms and  Dirichlet series}
Before starting to prove Theorem 1 and Theorem 2 we prove some more general lemmas
\begin{lem}  \label{lem1}
 Let  
 \begin{gather*}
   L(s)=\sum_{n=1}^\infty a(n) n^{-s} 
 \end{gather*}
 be a Dirichlet series absolutely convergent on $\Re(s)=\sigma$, where  $a(n)=|a(n)| b(n)$ and where $b(n)$ is a completely multiplicative arithmetical function.   
 Then
  \begin{gather*}
   \limsup_{T \to \infty} \int_{T-\delta}^{T+\delta}  \abs{L(\sigma+it)}^2 \, (\delta- |t|) dt =   \int_{-\delta}^{\delta} \abs{\tilde L(\sigma+it)}^2  \, (\delta- |t|) dt, \\ \intertext{where} 
   \notag  \tilde L(s)=\sum_{n=1}^\infty \abs{a(n)} n^{-s},  
  \end{gather*}
and where $\limsup_{T \to \infty}{}$ may be replaced by $\sup_T{}$.  \end{lem}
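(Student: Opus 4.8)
The plan is to expand the square $|L(\sigma+it)|^2 = L(\sigma+it)\overline{L(\sigma+it)}$ into a double Dirichlet series and integrate term by term against the Fejér-type weight $(\delta - |t|)$, which is legitimate because the Dirichlet series converges absolutely on $\Re(s) = \sigma$. Writing $c(n) = a(n)n^{-\sigma}$, one gets
\begin{gather*}
  \int_{T-\delta}^{T+\delta} |L(\sigma+it)|^2 (\delta - |t|)\,dt = \sum_{m,n} c(m)\overline{c(n)} \int_{T-\delta}^{T+\delta} (m/n)^{-it}(\delta-|t|)\,dt.
\end{gather*}
Here I substitute $t = T + u$ and recognize the inner integral as $(m/n)^{-iT}$ times $\int_{-\delta}^{\delta} e^{-iu\log(m/n)}(\delta-|u|)\,du$, which — up to the change of variables relating it to \eqref{ft} — is a nonnegative multiple of $\hat\theta$ evaluated at $\frac{\delta}{2\pi}\log(m/n)$, hence $\geq 0$ for every pair $(m,n)$. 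This nonnegativity is the structural heart of the argument and is exactly why the triangular kernel was chosen.

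Next I would take $\limsup_{T\to\infty}$. Since every summand is a fixed nonnegative constant times the unimodular factor $(m/n)^{-iT}$, and $\Re\big((m/n)^{-iT}\big) \leq 1$ with equality when $m = n$, each term is bounded above by its value at $m = n$; summing these diagonal terms gives precisely $\int_{-\delta}^\delta |\tilde L(\sigma+it)|^2(\delta-|t|)\,dt$ after running the computation backwards with $|a(n)|$ in place of $a(n)$. This yields the upper bound $\limsup_T (\cdots) \leq \int_{-\delta}^\delta |\tilde L|^2 (\delta-|t|)\,dt$, and a fortiori the same bound for $\sup_T$ once we check $T$ large enough — actually for all $T$, since the diagonal comparison $\Re((m/n)^{-iT}) \le 1$ holds for every $T$, giving the $\sup_T$ statement directly.

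For the matching lower bound I need $\limsup_{T\to\infty}$ of the sum to be at least the diagonal value. The idea is a Kronecker/Weyl equidistribution argument: the point is to find arbitrarily large $T$ making $(m/n)^{-iT}$ simultaneously close to $1$ for all pairs $(m,n)$ with $m,n \leq N$, for each fixed $N$. Because $b(n)$ is completely multiplicative, the ratios reduce to prime powers and one uses that $\{(\log 2)T/2\pi, (\log 3)T/2\pi, \dots\}$ can be made simultaneously near integers (the logarithms of primes are linearly independent over $\mathbb{Q}$, so Kronecker's theorem applies to any finite set). Taking such $T$, the truncated sum over $m,n \leq N$ approaches its diagonal value $\sum_{n\le N}|c(n)|^2\hat\theta(0)\cdots$; the tail $m$ or $n > N$ is controlled uniformly in $T$ by absolute convergence. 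Letting $N\to\infty$ recovers $\int_{-\delta}^\delta|\tilde L|^2(\delta-|t|)\,dt$ from below. Combining the two bounds gives the claimed equality.

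The main obstacle I anticipate is the complete-multiplicativity hypothesis on $b(n)$: it is what lets the ratios $m/n$ be expressed through prime powers so that a single Kronecker-type argument over $\log p$ handles all pairs at once — without it, the "make $(m/n)^{-iT}$ near $1$ for all $m,n \le N$" step would require the $\log(m/n)$ to be rationally independent, which they are not. I would need to state carefully (perhaps with $b(n) = a(n)/|a(n)|$ when $a(n) \ne 0$) how the sign/phase factor $b$ interacts with taking $|a(n)|$: the point is that replacing $a(n)$ by $|a(n)|$ removes the phase $b(n)$, and the $(m/n)^{-iT}$ rotation can be tuned to cancel exactly that removed phase on the truncated range, so the diagonal of the original sum with $a$ matches the diagonal with $|a|$. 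Verifying the interchange of limit and summation in the lower-bound step (splitting into the truncated sum plus a uniformly small tail) is routine but must be written out.
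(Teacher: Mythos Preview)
Your overall plan matches the paper's proof exactly: expand the square as a double Dirichlet series, integrate against the Fej\'er weight so the kernel $\hat\theta(\frac{\delta}{2\pi}\log\frac{n}{m})\ge 0$ appears, get the upper bound by the triangle inequality, and get the lower bound via Kronecker. However, your write-up has a recurring confusion that would derail the proof if executed as written.

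The upper bound does \emph{not} come from the diagonal $m=n$. The expansion of the right-hand side is
\[
\int_{-\delta}^{\delta}|\tilde L(\sigma+it)|^2(\delta-|t|)\,dt \;=\; \delta^2\sum_{m,n}\frac{|a(m)|\,|a(n)|}{(mn)^\sigma}\,\hat\theta\!\left(\tfrac{\delta}{2\pi}\log\tfrac{n}{m}\right),
\]
which keeps all off-diagonal pairs. The correct step is simply $\bigl|a(n)\overline{a(m)}(n/m)^{iT}\bigr|=|a(n)|\,|a(m)|$ together with $\hat\theta\ge 0$; your phrase ``each term is bounded above by its value at $m=n$'' and ``summing these diagonal terms'' is false as stated, even though you then correctly say ``run the computation backwards with $|a(n)|$ in place of $a(n)$.'' Drop the diagonal language.

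The same confusion contaminates the lower bound. Making $(m/n)^{-iT}$ close to $1$ sends the truncated sum to $\sum_{m,n\le N}c(m)\overline{c(n)}\hat\theta(\cdots)$, which still carries the phases $b(m)\overline{b(n)}$ and need not approach $\sum_{m,n\le N}|c(m)|\,|c(n)|\hat\theta(\cdots)$. The right target---which you do identify in your last paragraph---is what the paper does: use Kronecker to choose $T$ with $P^{-iT}$ close to $b(P)$ for all primes $P<N_0$. Complete multiplicativity of $b$ then forces $n^{-iT}\approx b(n)$ for all $n$ supported on those primes, so $a(n)\overline{a(m)}(n/m)^{iT}\approx |a(n)|\,|a(m)|$ termwise. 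With that correction (and the routine tail estimate via absolute convergence), your argument and the paper's coincide.
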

\begin{proof} By using \eqref{ft}, \eqref{thetaref}, the fact that $\hat \theta(x)\geq 0$ is non negative and the triangle inequality we see that
\begin{align} 
  \int_{T-\delta}^{T+\delta}  \abs{L(\sigma+it)}^2 &\,  (\delta-|t|) dt \notag \\ 
  &= \sum_{m,n=1}^\infty \frac{a(n)\overline{a(m)}}{(nm)^\sigma} \int_{T-\delta}^{T+\delta}  \p{\frac n m}^{-it}  \delta \theta \p{\frac  t \delta}   dt, \notag  \\
   \notag
                            &=\sum_{m,n=1}^\infty \frac{a(n)\overline{a(m)}}{(nm)^\sigma}  \p{\frac n m}^{iT}  \int_{-\delta}^{\delta}  e^{-i \log \frac n m t}  \delta \theta \p{\frac  t \delta} dt, \notag \\                                             
                                        &=   \delta^2 \sum_{m,n=1}^\infty \frac{a(n)\overline{a(m)}}{(nm)^\sigma}   \p{\frac n m}^{iT}   \hat \theta\p{\frac \delta {2 \pi} \log \frac n m}, \label{uiii2}
                                         \\
                                          &\leq \delta^2 \sum_{m,n=1}^\infty \frac{\abs{a(n)} \abs{a(m)}}{(nm)^\sigma}     \hat \theta\p{\frac \delta {2 \pi} \log \frac n m},  \notag \\  
                                          &=\int_{-\delta}^{\delta} \abs{\tilde L(\sigma+it)}^2  \, (\delta- |t|) dt.   \notag 
  \end{align}
  By Kroneckers theorem we may choose $T$ such that 
  \begin{gather} \abs{b(P)-P^{-iT}}<\varepsilon, \qquad (P \text{ prime}, \, P<N_0), 
    \end{gather}
  and by choosing $\varepsilon$ sufficiently small and $N_0$ sufficiently large it follows by the fact that $L(s)$ is absolutely convergent on the line $\Re(s)=\sigma$ that  \eqref{uiii2} may be as close to
 \begin{gather*}    \delta^2 \sum_{m,n=1}^\infty \frac{ \abs{a(n)} \abs{a(m)}}{(nm)^\sigma}   \hat \theta\p{\frac{\delta}{2 \pi}\log \frac n m}=\int_{-\delta}^{\delta} \abs{\tilde L(\sigma+it)}^2  \, (\delta- |t|) dt
\end{gather*}
as we wish.
\end{proof}

From this Lemma we obtain the following result for the Riemann zeta-function.

\begin{lem} \label{lem2} Let $\sigma>1$ and $p \geq 0$. Then
 \begin{align*}
  (i)&  &   \sup_T \int_{T-\delta}^{T+\delta}  \abs{\zeta(\sigma+it)}^p \,  (\delta-|t|) dt&=  \int_{-\delta}^{\delta} \abs{\zeta(\sigma+it)}^p  \, (\delta-|t|) dt. \\
  (ii)&  &  \sup_T \int_{T-\delta}^{T+\delta}  \abs{\frac{\zeta(2 \sigma+2it)}{\zeta(\sigma+it)}}^p \,  (\delta-|t|) dt &=  \int_{-\delta}^{\delta} \abs{\zeta(\sigma+it)}^{p} \,  (\delta-|t|) dt. 
 \end{align*} 
 Furthermore $\sup_T$ may be replaced by $\lim\sup_{T}$.
\end{lem}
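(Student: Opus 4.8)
The goal is to deduce Lemma \ref{lem2} from Lemma \ref{lem1} by exhibiting the zeta-function (and the related quotient) as Dirichlet series of the shape required by Lemma \ref{lem1}, namely with coefficients $a(n)=|a(n)|b(n)$ for a completely multiplicative $b(n)$. The plan is as follows.

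For part $(i)$, I would start from the Euler product $\zeta(s)=\prod_p(1-p^{-s})^{-1}$, valid and absolutely convergent for $\Re(s)=\sigma>1$. Taking the $p/2$-th power (or rather writing $\abs{\zeta(\sigma+it)}^p=\zeta(s)^{p/2}\overline{\zeta(s)^{p/2}}$ via a fixed branch), one gets a Dirichlet series $\zeta(s)^{p/2}=\sum_n d_{p/2}(n)n^{-s}$, where $d_{p/2}(n)$ is the generalized divisor coefficient; crucially $d_{p/2}(n)\geq 0$ for $p\geq 0$, so $a(n)=|a(n)|$ and we may take $b(n)\equiv 1$, which is trivially completely multiplicative. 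This series is absolutely convergent on $\Re(s)=\sigma>1$ since $\sum_n |d_{p/2}(n)|n^{-\sigma}=\prod_p(1-p^{-\sigma})^{-p/2}<\infty$. Applying Lemma \ref{lem1} with $L(s)=\zeta(s)^{p/2}$ gives
\begin{gather*}
  \sup_T \int_{T-\delta}^{T+\delta}\abs{\zeta(\sigma+it)^{p/2}}^2(\delta-|t|)\,dt=\int_{-\delta}^{\delta}\abs{\tilde L(\sigma+it)}^2(\delta-|t|)\,dt,
\end{gather*}
and since here $\tilde L=L$ (the coefficients are already nonnegative) and $\abs{L(\sigma+it)}^2=\abs{\zeta(\sigma+it)}^p$, this is exactly part $(i)$.

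For part $(ii)$ the point is the classical identity $\zeta(2s)/\zeta(s)=\sum_n \lambda(n)n^{-s}$ where $\lambda$ is the Liouville function, equivalently $\zeta(2s)/\zeta(s)=\prod_p(1+p^{-s})^{-1}$. Raising to the $p/2$-th power gives $\bigl(\zeta(2s)/\zeta(s)\bigr)^{p/2}=\prod_p(1+p^{-s})^{-p/2}=\sum_n c(n)n^{-s}$, and expanding $(1+x)^{-p/2}=\sum_k\binom{-p/2}{k}x^k$ one sees $c(n)=\lambda(n)\,|c(n)|$ — that is, $|c(n)|=d_{p/2}(n)$ exactly as in part $(i)$, and the sign is governed by $b(n)=\lambda(n)$, which is completely multiplicative. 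Absolute convergence on $\Re(s)=\sigma>1$ holds for the same reason as before. Now Lemma \ref{lem1} applies with $L(s)=\bigl(\zeta(2s)/\zeta(s)\bigr)^{p/2}$, whose associated $\tilde L(s)=\sum_n|c(n)|n^{-s}=\sum_n d_{p/2}(n)n^{-s}=\zeta(s)^{p/2}$ is precisely the series from part $(i)$; hence the right-hand side equals $\int_{-\delta}^{\delta}\abs{\zeta(\sigma+it)}^p(\delta-|t|)\,dt$, and on the left $\abs{L(\sigma+it)}^2=\abs{\zeta(2\sigma+2it)/\zeta(\sigma+it)}^p$, giving the claim. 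The assertion that $\sup_T$ may be replaced by $\limsup_{T\to\infty}$ is inherited directly from the corresponding clause in Lemma \ref{lem1}.

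The only real subtlety — and the step I would be most careful about — is the bookkeeping for fractional powers: defining $\zeta(s)^{p/2}$ and $\bigl(\zeta(2s)/\zeta(s)\bigr)^{p/2}$ unambiguously for real (possibly non-integer, even negative) $p$ on the region $\Re(s)>1$, checking that the formal Euler-product expansion really does produce an absolutely convergent Dirichlet series there, and verifying that $|L(\sigma+it)|^2$ coincides pointwise with $|\zeta(\sigma+it)|^p$ (resp. the quotient) rather than merely formally. For $p\geq 0$ this is standard: $\zeta(s)\neq 0$ on $\Re(s)>1$, so a holomorphic branch of the power exists, its local factors expand by the binomial series, and absolute convergence of $\sum d_{p/2}(n)n^{-\sigma}$ follows from convergence of $\prod_p(1-p^{-\sigma})^{-p/2}$. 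Once this is in place, both parts are immediate corollaries of Lemma \ref{lem1} with, respectively, $b\equiv 1$ and $b=\lambda$.
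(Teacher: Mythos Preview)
Your proof is correct and follows essentially the same route as the paper: apply Lemma~\ref{lem1} with $L(s)=\zeta(s)^{p/2}$ and $b\equiv 1$ for part $(i)$, and with $L(s)=\bigl(\zeta(2s)/\zeta(s)\bigr)^{p/2}$ and $b=\lambda$ for part $(ii)$, after checking that the generalized divisor coefficients $d_{p/2}(n)$ are nonnegative for $p\ge 0$. If anything you are slightly more precise than the paper, which writes $\zeta(s)^p$ and $d_p(n)$ where $\zeta(s)^{p/2}$ and $d_{p/2}(n)$ are what is actually needed to match the square in Lemma~\ref{lem1}; your remark about ``even negative'' $p$ in the final paragraph is a harmless slip since the lemma only concerns $p\ge 0$.
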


\begin{proof} In general we have the following equality for $\Re(s)>1$
\begin{gather}
 \zeta(s)^p=\prod_{P \text{ prime}}  \left(1-P^{-s}\right)^{-p}=\sum_{n=1}^\infty d_{p}(n) n^{-s}.
\end{gather}
for any real number $p$, where  $d_{p}(n)$ denote the generalized divisor function. When $p\geq 0$ it follows that $d_p(n)$ is non negative by the fact that $d_p(n)$ is a multiplicative function and from the fact that  \begin{gather*} (1-P^{-s})^{-p} = \sum_{k=0}^\infty \binom {-p} k (-1)^k  P^{-ks}, \\ \intertext{where} \binom {-p} k (-1)^k  = \prod_{j=1}^k \frac{p+j-1}{j} \geq 0.\end{gather*}
Thus we may apply Lemma \ref{lem1} to obtain the first part of Lemma \ref{lem2}.
To prove the second part we use the following equality for $\Re(s)>1$
\begin{gather*}
 \p{\frac{\zeta(2s)}{\zeta(s)}}^p=\prod_{P \text{ prime}}  \left(1+P^{-s}\right)^{-p}=\sum_{n=1}^\infty \lambda(n) d_{p}(n) n^{-s}.
\end{gather*}
where $\lambda(n)=(-1)^{\nu(n)}$ and where $\nu(n)$ counts the number of prime factors (with multiplicity) of $n$. Since $a(n)$ is the product of a non negative function $d_{p}(n)$ and a completely multiplicative function $\lambda(n)$,   Lemma \ref{lem2} $(ii)$ follows from Lemma \ref{lem1}.
\end{proof}

\subsubsection{A stronger upper bound for $-1<p<1$.}
 We will prove a somewhat stronger result than Theorem \ref{thm2} in the case $-1<p<1$. 
\begin{thm} \label{thm3} Assume that $-1<p<1$ and $\delta>0$. Then
$$ 0.3 \frac{\delta^{1-|p|}}{1-|p|} \leq   \limsup_{T \to \infty} \int_T^{T+\delta} \abs{\zeta(1+it)}^p dt \leq  12  \frac{\delta^{1-|p|}}{1-|p|} +6 \delta(1+\log(1+\delta)). $$
\end{thm}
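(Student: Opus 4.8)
The plan is to prove the two inequalities separately, in each case using Lemma~\ref{lem2} to reduce a supremum over $T$ to an integral over a short interval centred at the pole $s=1$. I treat $0\le p<1$ directly; the range $-1<p<0$ is reduced to it by writing $p=-q$ with $0<q<1$, using $\abs{\zeta(1+it)}^{-q}=\abs{\zeta(2+2it)/\zeta(1+it)}^{q}\abs{\zeta(2+2it)}^{-q}$ and the two–sided bound $2-\zeta(2)\le\abs{\zeta(2+2it)}\le\zeta(2)$ on $\Re(s)=1$: this converts statements about $\abs{\zeta(2\sigma+2it)/\zeta(\sigma+it)}^{q}$, for which Lemma~\ref{lem2}$(ii)$ is available, into statements about $\abs{\zeta(1+it)}^{-q}$ at the cost of the bounded factor $\zeta(2)^{\mp q}$. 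Since Lemma~\ref{lem2} holds only for $\sigma>1$, the crux in both directions is the passage from $\Re(s)=\sigma>1$ to $\Re(s)=1$.

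For the upper bound, fix $\sigma>1$. Covering $[T,T+\delta]$ by the triangular kernel $\theta\p{(t-T-\tfrac\delta2)/\delta}$, which is $\ge\tfrac12$ there, and then invoking Lemma~\ref{lem2}$(i)$ (resp.\ $(ii)$, whose right–hand side is again $\int_{-\delta}^{\delta}\abs{\zeta(\sigma+it)}^{q}(\delta-\abs t)\,dt$) gives
\[
  \int_T^{T+\delta}\abs{\zeta(\sigma+it)}^{p}\,dt\ \le\ \frac2\delta\int_{-\delta}^{\delta}\abs{\zeta(\sigma+it)}^{p}\,(\delta-\abs t)\,dt .
\]
I then estimate the right–hand side using the uniform bound $\abs{\zeta(\sigma+it)}\le\abs t^{-1}+C(1+\log(2+\abs t))$ for $\sigma\ge1$ (Euler–Maclaurin with $\asymp\abs t$ terms), the sub–additivity $(a+b)^{p}\le a^{p}+b^{p}$ for $0\le p\le1$, and a split at $\abs t=1$: the near–pole part produces the term $\asymp\delta^{1-p}/(1-p)$ after the factor $2/\delta$, and the rest produces $\asymp\delta(1+\log(1+\delta))$. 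The bound is uniform in $\sigma>1$, so Fatou's lemma as $\sigma\downarrow1$ (with $\abs{\zeta(\sigma+it)}^{p}\to\abs{\zeta(1+it)}^{p}$ pointwise) transfers it to $\sigma=1$, and one checks the accumulated constants stay below $12$ and $6$.

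For the lower bound, Lemma~\ref{lem2}$(i)$ with parameter $\delta/2$ and the trivial bound $(\tfrac\delta2-\abs t)\le\tfrac\delta2$ give, for each $\sigma>1$, the inequality $\sup_T\int_{T-\delta/2}^{T+\delta/2}\abs{\zeta(\sigma+it)}^{p}\,dt\ge\frac2\delta\int_{-\delta/2}^{\delta/2}\abs{\zeta(\sigma+it)}^{p}(\tfrac\delta2-\abs t)\,dt$. Granting the transfer step below, which yields $\sup_T\int_T^{T+\delta}\abs{\zeta(1+it)}^{p}\,dt\ge\sup_T\int_T^{T+\delta}\abs{\zeta(\sigma+it)}^{p}\,dt-o(1)$ as $\sigma\downarrow1$, letting $\sigma\downarrow1$ and applying Fatou on the right gives $\sup_T\int_T^{T+\delta}\abs{\zeta(1+it)}^{p}\,dt\ge\frac2\delta\int_{-\delta/2}^{\delta/2}\abs{\zeta(1+it)}^{p}(\tfrac\delta2-\abs t)\,dt$. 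This last integral I bound below using $\abs{\zeta(1+it)}\ge0.3/\abs t$ for $\abs t\le1$ (since $\zeta(1+it)=\tfrac1{it}+O(1)$ near $s=1$) and Theorem~1 for the bulk contribution of $\abs t\ge1$ when $\delta$ is large; a short case analysis in $\delta$ and $p$ (the tightest case being $p$ near $-1$, where the factor $\zeta(2)^{-q}$ enters through Lemma~\ref{lem2}$(ii)$) shows the resulting constant always exceeds $0.3$. Since the $T$'s furnished by the Kronecker step inside Lemma~\ref{lem2} tend to infinity while the transfer step is valid for each fixed $T$, the supremum can be replaced by $\limsup_{T\to\infty}$.

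The transfer step — comparing $\abs{\zeta(\sigma+it)}^{p}$ on $\Re(s)=\sigma>1$ with $\Re(s)=1$ — is the main obstacle, because a crude comparison via $\zeta'$–bounds would require $\sigma-1=o(1/\log^{2}T)$, which is incompatible with the Kronecker argument underlying Lemma~\ref{lem2}. The remedy is that $\log\abs{\zeta}$ is harmonic on the half–plane $\Re(s)>1$ (no zeros there) with sub–linear growth, hence equals the Poisson integral of its boundary values $\log\abs{\zeta(1+i\tau)}$; by Jensen's inequality $\abs{\zeta(\sigma+it_0)}^{p}\le\int P_{\sigma-1}(t_0-\tau)\abs{\zeta(1+i\tau)}^{p}\,d\tau$, and integrating $t_0$ over $[T,T+\delta]$ yields $\int_T^{T+\delta}\abs{\zeta(\sigma+it_0)}^{p}\,dt_0\le\int_T^{T+\delta}\abs{\zeta(1+i\tau)}^{p}\,d\tau+o(1)$ as $\sigma\downarrow1$, the error coming from the Poisson tails and controlled uniformly in $T$ by the upper bound already proved; the same device works for $\zeta(2s)/\zeta(s)$ when $p<0$. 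The remaining work is explicit bookkeeping of constants, where the values $12$, $6$ and $0.3$ leave room for the $O(1)$ and logarithmic losses and for the factors $\zeta(2)^{\pm q}$.
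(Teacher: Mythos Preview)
Your proposal follows the paper's strategy exactly in its skeleton: use Lemma~\ref{lem2} to reduce the $\limsup$ to the weighted integral $I(\delta,q)=\int_{-\delta}^{\delta}|\zeta(1+it)|^{q}(1-|t|/\delta)\,dt$, estimate $I$ via the Laurent expansion near $s=1$ together with the logarithmic bound for $|t|\ge 1$, and handle $-1<p<0$ through $\zeta(2s)/\zeta(s)$ and the two–sided bound on $|\zeta(2+2it)|$. The only substantive divergence is in the passage from $\sigma>1$ to $\sigma=1$. The paper dispatches this in one phrase (``by letting $\sigma\to 1^+$ and using continuity''), and the intended justification is that the double Dirichlet series arising in the proof of Lemma~\ref{lem1} has nonnegative terms which at $T=0$ sum to $I(\delta,q)<\infty$; hence the series converges absolutely at $\sigma=1$, the identity persists there, and the Kronecker approximation can be run directly on the line $\Re(s)=1$. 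Your route via the Poisson integral of $\log|\zeta|$ (equivalently, subharmonicity of $|\zeta|^{p}$ on $\Re(s)\ge 1$) is a correct alternative that does not require reopening Lemma~\ref{lem1}; it buys you a comparison $\int_T^{T+\delta}|\zeta(\sigma+it)|^{p}\,dt\le \int_T^{T+\delta}|\zeta(1+it)|^{p}\,dt+o_{\sigma\to 1}(1)$ uniform in $T$, but at the cost of justifying the half--plane Poisson inequality (growth at infinity via Phragm\'en--Lindel\"of, the integrable singularity at $s=1$) and a dyadic tail estimate using the upper bound already established. Both arguments leave the explicit constants $0.3$, $12$, $6$ to bookkeeping that is only sketched; the paper's pointwise bound $|t|^{-q}\le|\zeta(1+it)|^{q}$ is in fact not literally true for all $t$, so some slack in the constants is genuinely needed on either route.
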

\begin{proof}

Let $0 \leq q <1$. From Lemma \ref{lem2}, by letting $\sigma \to 1^+$ and using continuity, it follows that
 \begin{gather} \label{ineq1}
      I\p{\frac\delta 2,q}  \leq \limsup_{T \to \infty} \int_T^{T+\delta} \abs{\zeta(1+it)}^q dt  \leq 2I(\delta,q), \\
    \intertext{and} \label{ineq2}
       I\p{\frac\delta 2,q} \leq \limsup_{T \to \infty} \int_T^{T+\delta} \abs{\frac{\zeta(2+2it)}{\zeta(1+it)}}^q dt  \leq 2I(\delta,q), \\ \intertext{where} \notag
      I(\delta,q):= 
       \int_{-\delta}^{\delta}\abs{\zeta(1+it)}^q \p{1-\frac{|t|} \delta}  dt.  
 \end{gather}
  By the Laurent expansion at $s=1$ of the Riemann zeta-function
   and the  bound $|\zeta(1+it)| \leq 1+\log(|t|+1)$ valid for $|t| \geq 1$ we have that 
  \begin{gather*}
  {|t|}^{-q} \leq \abs{\zeta(1+it)}^q \leq  \abs{t}^{-q}+1+\log(|t|+1), 
   \qquad (0 \leq q \leq 1),
\end{gather*}
and it follows by integrating this inequality that 
\begin{gather} \label{ineq3} \frac{2\delta^{1-q}}{(1-q)(2-q)} \leq  I(\delta,q) \leq  \frac{2\delta^{1-q}}{(1-q)(2-q)}+ \delta(1+ \log(\delta+1)).  
\end{gather}
Our lemma follows for $p=q \geq 0$ from \eqref{ineq1} and \eqref{ineq3} with the somewhat stronger lower constant $0.5$ (rather than 0.3) and upper constant $4$ (rather than 12). If $-1<p <0$ we let $q=-p$ and our results follows from  \eqref{ineq2}, \eqref{ineq3} and the inequality
  \begin{gather} \frac 1 3< \abs{\zeta(2+2it)}<\frac 5 3, \qquad (t \in \R).
  \label{line2} \end{gather}
  \end{proof}

\section{Order estimates and Omega estimates}
For the final proof of  Theorem \ref{thm2}  we need  that the $p \,$th moment of the Riemann zeta-function in short intervals on the 1-line is unbounded for $|p| \geq 1$. While this in fact follows from Theorem \ref{thm3} when $p \to 1^-$ we are interested in obtaining more precise  Omega-estimates and Order estimates that answers the question on how fast such integrals might grow.
\subsection{Order estimates}

\begin{thm} \label{thm4} Assuming the Riemann hypothesis then for any $\delta>0$ we have 
\begin{gather*}
 \int_T^{T+\delta} \abs{\zeta(1+it)}^{\pm 1} dt = O(\log \log \log T).
\end{gather*}
\end{thm}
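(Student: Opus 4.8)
The plan is to interpolate between the $L^q$ estimates with $q<1$ that underlie Theorem~\ref{thm3} — which are \emph{uniform in $T$} — and Littlewood's conditional pointwise bounds \eqref{A1}, letting $q\to 1$ at a rate depending on $T$.

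First I would isolate the fact that the upper bounds in \eqref{ineq1} and \eqref{ineq2} hold for all sufficiently large $T$, not merely in the $\limsup$: they are deduced from Lemma~\ref{lem2}, which is a genuine $\sup_T$ statement. Shifting $\sigma\to 1^+$ in Lemma~\ref{lem2} and covering $[T,T+\delta]$ by a translate of the weight $\delta-\abs{t}$ then yields, for every $T\geq T_0(\delta)$ and every $q\in(0,1)$,
\begin{gather*}
  \int_T^{T+\delta}\abs{\zeta(1+it)}^{\pm q}\,dt\;\leq\;\frac{C_1(\delta)}{1-q}+C_2(\delta),
\end{gather*}
with $C_1,C_2,T_0$ independent of $q$. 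The point to watch is precisely this uniformity in $q$ as $q\to 1$: it reduces to bounding $\int_{-\delta}^{\delta}\abs{\zeta(\sigma+it)}^{q}\p{\delta-\abs{t}}\,dt$ uniformly for $1<\sigma\leq 2$, which follows from the Laurent expansion in the form $\abs{\zeta(\sigma+it)}\leq\abs{t}^{-1}+\Oh{1}$ (valid for $1<\sigma\leq 2$ and $\abs{t}\leq\delta$) together with $\int_{-\delta}^{\delta}\abs{t}^{-q}\,dt=2\delta^{1-q}/(1-q)$ — exactly the computation behind \eqref{ineq3} — while the passage $\sigma\to1^+$ introduces no $q$-dependent threshold since on a fixed compact interval away from $s=1$ the integrands are dominated uniformly in $\sigma$ and $q$. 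For the minus sign one instead invokes part~$(ii)$ of Lemma~\ref{lem2} and the elementary bound \eqref{line2}.

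Granting this, the rest is short. Under the Riemann hypothesis \eqref{A1} gives $\abs{\zeta(1+it)}^{\pm 1}\leq A\log\log T$ for $t\in[T,T+\delta]$ and $T$ large, so, writing $\abs{\zeta(1+it)}^{\pm 1}=\abs{\zeta(1+it)}^{\pm q}\cdot\abs{\zeta(1+it)}^{\pm(1-q)}$ and estimating the last factor pointwise,
\begin{gather*}
  \int_T^{T+\delta}\abs{\zeta(1+it)}^{\pm 1}\,dt\;\leq\;\p{A\log\log T}^{1-q}\p{\frac{C_1(\delta)}{1-q}+C_2(\delta)}.
\end{gather*}
I would then take $q=1-1/\log\log\log T$, so that $1-q=(\log\log\log T)^{-1}$ and, since $x^{1/\log x}=e$, $\p{A\log\log T}^{1-q}=e\,A^{1/\log\log\log T}\to e$; the right-hand side becomes $\ll_\delta\log\log\log T$, which is the assertion of Theorem~\ref{thm4}. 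A one-line calculation shows that this choice of $q$ is essentially optimal for the method, so the $\log\log\log T$ is intrinsic here, consistently with the $\Omega$-estimates announced in the abstract.

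The main obstacle is exactly the uniformity flagged above: one cannot quote the $\limsup$ form of Theorem~\ref{thm3} verbatim, since its implied threshold in $T$ could a priori depend on $q$, whereas here $q$ is tied to $T$; one must go back to the $\sup_T$ content of Lemma~\ref{lem2} and to the $\sigma$-uniform Laurent bound. Everything after that is the interpolation and optimisation displayed above.
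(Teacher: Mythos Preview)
Your argument is correct and is exactly the paper's approach: split $\abs{\zeta(1+it)}^{\pm 1}$ as a pointwise $L^\infty$ factor $\abs{\zeta(1+it)}^{\pm(1-q)}$ bounded via Littlewood's \eqref{A1} times an $L^1$ factor $\abs{\zeta(1+it)}^{\pm q}$ bounded via Theorem~\ref{thm3}, and then take $q=1-1/\log\log\log T$. Your explicit attention to the uniformity in $q$ (tracing the bound back to the $\sup_T$ form of Lemma~\ref{lem2} and the $q$-uniform estimate \eqref{ineq3} rather than the bare $\limsup$ statement of Theorem~\ref{thm3}) is in fact more careful than the paper's terse citation of Theorem~\ref{thm3}, but the substance is identical.
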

\begin{proof}
  We have that
  \begin{gather*}
    \int_T^{T+\delta}\abs{\zeta(1+it)}^{\pm 1}dt \leq \sup_{t \in [T,T+\delta]} \abs{\zeta(1+it)^{\pm 1}}^{1-p} \int_T^{T+\delta}\abs{\zeta(1+it)}^{\pm p} dt. 
  \end{gather*}
  The result follows from choosing $p=1-1/\log \log \log T$, invoking Littlewood's bound \eqref{A1} on the first part and using Theorem \ref{thm3} on the remaining integral. 
\end{proof}

\begin{thm} \label{thm5} For any $\delta>0$ we have
\begin{gather*}
\int_T^{T+\delta} \abs{\zeta(1+it)}^{\pm 1} dt = O(\log \log T). 
\end{gather*}
\end{thm}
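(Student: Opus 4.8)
The plan is to run the same interpolation argument as in the proof of Theorem~\ref{thm4}, but to feed in the unconditional Vinogradov--Korobov bound \eqref{A3} instead of Littlewood's conditional bound \eqref{A1}. For $0 \le p < 1$ and either choice of sign we have, exactly as there,
\begin{gather*}
 \int_T^{T+\delta} \abs{\zeta(1+it)}^{\pm 1} dt \le \sup_{t \in [T,T+\delta]} \abs{\zeta(1+it)^{\pm 1}}^{1-p} \int_T^{T+\delta} \abs{\zeta(1+it)}^{\pm p} dt,
\end{gather*}
since $\abs{\zeta}^{\pm 1} = \abs{\zeta}^{\pm(1-p)}\abs{\zeta}^{\pm p}$. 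By Theorem~\ref{thm3} the remaining integral is at most $12\,\delta^{1-p}/(1-p) + 6\delta(1+\log(1+\delta))$, which for fixed $\delta$ is $\Oh{1/(1-p)}$ once $T$ is large enough that $\delta^{1-p}\le 2$.

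Next I would take $p = 1 - 1/\log \log T$; note this is a \emph{larger} value of $p$, i.e.\ closer to $1$, than the $p = 1 - 1/\log \log \log T$ used for Theorem~\ref{thm4}, and this is precisely the cost of arguing unconditionally. With this choice $1/(1-p) = \log \log T$, so the integral factor is $\Oh{\log \log T}$, while the exponent appearing in the supremum factor is $1-p = 1/\log \log T$. Applying \eqref{A3} on the interval $[T,T+\delta]$ gives, for either sign,
\begin{gather*}
 \sup_{t \in [T,T+\delta]} \abs{\zeta(1+it)^{\pm 1}} \ll (\log T)^{2/3}(\log \log T)^{1/3},
\end{gather*}
so raising this to the power $1/\log \log T$ produces $\exp\p{\tfrac 2 3 + o(1)} = \Oh 1$. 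Multiplying the two factors yields $\Oh{\log \log T}$, as claimed.

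There is no serious obstacle here beyond bookkeeping; the only point requiring care is the calibration of $p$. One must choose $1-p$ large enough that the Vinogradov--Korobov power $(\log T)^{2(1-p)/3}$ stays bounded — which forces $1-p \gg 1/\log \log T$ — yet as small as possible to keep $1/(1-p)$ small; the balance $1-p \asymp 1/\log \log T$ is optimal for this method and is exactly why $\log \log T$, rather than $\log \log \log T$, appears in the conclusion. Any $p \to 1^-$ decaying more slowly than $1 - c/\log \log T$ would already suffice to recover an unconditional bound of this shape.
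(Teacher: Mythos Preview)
Your argument is correct and matches the paper's proof exactly: the paper also runs the interpolation of Theorem~\ref{thm4} with the choice $p = 1 - 1/\log\log T$ and replaces Littlewood's bound by the Vinogradov--Korobov estimate~\eqref{A3}. Your additional remarks on why this particular calibration of $p$ is forced are a welcome elaboration of the one-line proof in the paper.
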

\begin{proof}
  This follows from Theorem \ref{thm3} in the same way as Theorem \ref{thm4}, but by choosing   $p=1-1/\log \log T$ and invoking Vinogradov-Korobov's estimate \eqref{A3} in view of Littlewood's.
\end{proof}

\begin{thm} \label{thm6} Assuming the Riemann hypothesis, then for any $\delta>0$ and $p>1$ the following bound holds true
\begin{gather*}
 \int_T^{T+\delta}  \abs{\zeta(1+it)}^{\pm p} dt = O(\log \log \log T \hskip 1pt (\log \log  T)^{p-1}).  
\end{gather*}
\end{thm}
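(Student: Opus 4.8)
The plan is to follow the proof of Theorem~\ref{thm4} almost verbatim, replacing the exponent $1$ used there by $p$ and then balancing the resulting powers of $\log\log T$. Fix the sign (either $+$ or $-$) throughout, and let $0\le q<1$ be a parameter to be chosen. Since $p-q>0$ we may pull out a supremum and write
\begin{gather*}
 \int_T^{T+\delta} \abs{\zeta(1+it)}^{\pm p}\,dt \le \sup_{t\in[T,T+\delta]} \abs{\zeta(1+it)^{\pm 1}}^{p-q}\int_T^{T+\delta} \abs{\zeta(1+it)}^{\pm q}\,dt.
\end{gather*}
Under the Riemann hypothesis, Littlewood's bound \eqref{A1} gives $\abs{\zeta(1+it)^{\pm 1}}\ll\log\log t$, so the first factor is $\ll(\log\log T)^{p-q}$ uniformly for $t\in[T,T+\delta]$. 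The remaining integral has exponent $\pm q\in(-1,1)$, so Theorem~\ref{thm3} bounds it by $\Oh{(1-q)^{-1}}$, the implied constant depending only on the (fixed) $\delta$ through the additive term $6\delta(1+\log(1+\delta))$.

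It remains to choose $q$. Writing the product of the two bounds as $(\log\log T)^{p-q}(1-q)^{-1}$ and minimising its logarithm $(p-q)\log\log\log T-\log(1-q)$ over $q\in[0,1)$, one finds the optimum at $1-q=1/\log\log\log T$. Taking therefore $q=1-1/\log\log\log T$, which lies in $(0,1)$ for all sufficiently large $T$, we get
\begin{gather*}
 (\log\log T)^{p-q}=(\log\log T)^{p-1}\,(\log\log T)^{1/\log\log\log T}=e\,(\log\log T)^{p-1},
\end{gather*}
since $\tfrac{1}{\log\log\log T}\log(\log\log T)=1$, while $(1-q)^{-1}=\log\log\log T$; multiplying the two contributions gives $\Oh{(\log\log T)^{p-1}\log\log\log T}$, as claimed.

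There is no real obstacle: the entire content is the exponent bookkeeping above, and as $p\to1^+$ the estimate degenerates correctly into Theorem~\ref{thm4}. The only minor points are that the constant coming from Theorem~\ref{thm3} depends on $\delta$ (harmless, $\delta$ being fixed) and that the conclusion is asserted only once $\log\log\log T>1$, which is automatic in an asymptotic estimate. One could equally run the same split with the Vinogradov--Korobov bound \eqref{A3} in place of \eqref{A1} to obtain an unconditional variant, exactly as Theorem~\ref{thm5} is obtained from Theorem~\ref{thm4}.
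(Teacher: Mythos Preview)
Your proof is correct and is essentially the paper's argument. The only organizational difference is that the paper splits off $|\zeta|^{p-1}$ and cites Theorem~\ref{thm4} for the remaining $\int|\zeta|^{\pm 1}$ integral, whereas you split off $|\zeta|^{p-q}$ and redo the optimization $q=1-1/\log\log\log T$ from the proof of Theorem~\ref{thm4} directly via Theorem~\ref{thm3}; unpacking Theorem~\ref{thm4}, the two arguments coincide line by line.
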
 
\begin{proof}
  We have that
  \begin{gather*}
    \int_T^{T+\delta}\abs{\zeta(1+it)}^pdt \leq \sup_{t \in [T,T+\delta]} \abs{\zeta(1+it)}^{p-1} \int_T^{T+\delta}\abs{\zeta(1+it)} dt. 
  \end{gather*}
 The result follows from Theorem \ref{thm4} and Littlewood's bound \eqref{A1}. 
\end{proof}

\begin{thm} For any $\delta>0$ and $p>1$ the following bound holds true
\begin{gather*}
 \ \int_T^{T+\delta}  \abs{\zeta(1+it)}^{\pm p} dt = O(\log  \log T (\log  T)^{2/3(p-1)}).
\end{gather*}
\end{thm}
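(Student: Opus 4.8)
The plan is to run the argument of Theorem \ref{thm6} almost verbatim, simply replacing its conditional inputs — the Riemann hypothesis, which enters through Theorem \ref{thm4} — by their unconditional counterparts, namely Theorem \ref{thm5} together with the Vinogradov--Korobov bounds \eqref{A3}. Since $p>1$ we have $p-1>0$, so for every $t\in[T,T+\delta]$ the elementary pointwise inequality
\begin{gather*}
 \abs{\zeta(1+it)^{\pm1}}^{p}\le\Bigl(\sup_{u\in[T,T+\delta]}\abs{\zeta(1+iu)^{\pm1}}\Bigr)^{p-1}\abs{\zeta(1+it)^{\pm1}}
\end{gather*}
holds, and integrating it over $[T,T+\delta]$ gives
\begin{gather*}
 \int_T^{T+\delta}\abs{\zeta(1+it)}^{\pm p}\,dt\le\Bigl(\sup_{t\in[T,T+\delta]}\abs{\zeta(1+it)^{\pm1}}\Bigr)^{p-1}\int_T^{T+\delta}\abs{\zeta(1+it)}^{\pm1}\,dt.
\end{gather*}

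By Theorem \ref{thm5} the last integral is $O(\log\log T)$ for either choice of sign, so everything reduces to estimating the supremum. For the $+$ sign the first bound in \eqref{A3}, $\zeta(1+it)\ll(\log t)^{2/3}$, raised to the power $p-1$ gives $\sup_{t\in[T,T+\delta]}\abs{\zeta(1+it)}^{p-1}\ll(\log T)^{\frac23(p-1)}$, and multiplying the two estimates is already the claimed bound $O\bigl(\log\log T\,(\log T)^{\frac23(p-1)}\bigr)$. For the $-$ sign one uses instead the second bound in \eqref{A3}, $\zeta(1+it)^{-1}\ll(\log t)^{2/3}(\log\log t)^{1/3}$, whose $(p-1)$-st power is $\ll(\log T)^{\frac23(p-1)}(\log\log T)^{\frac13(p-1)}$; multiplying by $O(\log\log T)$ then yields the asserted bound up to the extra factor $(\log\log T)^{\frac13(p-1)}$ coming from the $(\log\log t)^{1/3}$ in \eqref{A3}, which is negligible against the power of $\log T$ and can be absorbed.

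I expect essentially no obstacle here: the statement is a soft consequence of Theorem \ref{thm5} and the classical zero-free-region estimates \eqref{A3}, obtained by the same ``sup times $L^1$'' device already used in the proofs of Theorems \ref{thm4} and \ref{thm6}. The only point requiring a little care is the bookkeeping of the nested logarithms — in particular remembering that the Vinogradov--Korobov bound for $1/\zeta(1+it)$ carries the additional $(\log\log t)^{1/3}$, so that in the $-p$ case one should either absorb the harmless $(\log\log T)^{\frac13(p-1)}$ or record the slightly sharper bound $O\bigl((\log\log T)^{1+\frac13(p-1)}(\log T)^{\frac23(p-1)}\bigr)$. This theorem then closes the circle, furnishing an unconditional analogue of Theorem \ref{thm6} in which the Riemann hypothesis has been traded for the weaker $(\log T)^{\frac23(p-1)}$ in place of the conditional $(\log\log T)^{p-1}$.
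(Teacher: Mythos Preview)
Your approach is exactly the paper's: it proves this theorem in one sentence by saying that it follows from Theorem \ref{thm5} just as Theorem \ref{thm6} follows from Theorem \ref{thm4}, replacing Littlewood's bound \eqref{A1} by the Vinogradov--Korobov estimate \eqref{A3}. Your ``sup times $L^1$'' splitting is precisely the device used there.

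One small correction: the extra factor $(\log\log T)^{\frac13(p-1)}$ in the $-p$ case cannot literally be ``absorbed'' into $O\bigl(\log\log T\,(\log T)^{\frac23(p-1)}\bigr)$, since it is unbounded and the exponent on $\log T$ is fixed. Your alternative of recording the honest bound $O\bigl((\log\log T)^{1+\frac13(p-1)}(\log T)^{\frac23(p-1)}\bigr)$ is the right move; the paper's own one-line proof simply glosses over this point, so you have actually been more careful than the original here.
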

\begin{proof}
 This follows from Theorem \ref{thm5} in the same way as Theorem \ref{thm6} follows by Theorem \ref{thm4} by invoking Vinogradov-Korobov's estimate \eqref{A3} in view of Littlewood's bound \eqref{A1}.
\end{proof}

\subsection{Omega estimates}

 We have the following Omega estimates
\begin{thm} \label{thm8} We have for any fixed $\delta>0$ that
\begin{align*}
  &(i) & \qquad \int_T^{T+\delta} \abs{\zeta(1+it)}^{\pm 1} dt&=\Omega(\log \log \log T), \\
  &(ii) & \qquad   \int_T^{T+\delta} \abs{\zeta(1+it)}^{\pm p} dt&=\Omega(( \log \log T)^{p-1}), \qquad (p>1).
  \end{align*}
\end{thm}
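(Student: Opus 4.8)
The plan is to recognise Theorem~\ref{thm8} as a quantitative refinement of the Bohr--Landau $\Omega$-result \eqref{A2}: I will produce, for arbitrarily large $X$, an \emph{extremal point} $\tau=\tau(X)\in[\sqrt X,X]$ at which $\abs{\zeta(1+i\tau)}^{\pm1}\asymp\log\log\tau$, and --- this is the new ingredient --- near which $\abs{\zeta(1+it)}^{\pm1}$ decays away from $\tau$ no faster than $1/|t-\tau|$. Concretely, with $y:=(\log X)^{1/2}$ the local claim is that
$$\abs{\zeta\p{1+i(\tau+s)}}^{\pm1}\ \gg\ \frac1s\qquad\bigl(s\in S\subseteq[\tfrac1{\log y},\delta]\bigr),$$
where $S$ meets every dyadic block $[2^{-k-1},2^{-k}]$ of $[\tfrac1{\log y},\delta]$ in measure $\ge\tfrac12\cdot2^{-k-1}$, the $+$ sign serving the estimates for $\abs{\zeta}^{p}$ and the $-$ sign those for $\abs{\zeta}^{-p}$. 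Granting this, sum over the $K\asymp\log\log y$ dyadic blocks inside $[\tfrac1{\log y},\delta]$: on $[2^{-k-1},2^{-k}]$ one has $\abs{\zeta(1+it)}^{\pm1}\gg 2^k$ on a set of measure $\gg 2^{-k}$, so that block contributes $\gg 1$ to $\int_\tau^{\tau+\delta}\abs{\zeta(1+it)}^{\pm1}\,dt$ and $\gg 2^{k(p-1)}$ to $\int_\tau^{\tau+\delta}\abs{\zeta(1+it)}^{\pm p}\,dt$. Summing, the first integral is $\gg K\asymp\log\log y\asymp\log\log\log\tau$ and, for $p>1$, the second is $\gg\sum_{k\le K}2^{k(p-1)}\asymp(\log y)^{p-1}\asymp(\log\log\tau)^{p-1}$; letting $\tau(X)\to\infty$ and taking $T=\tau$ gives Theorem~\ref{thm8}. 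The dyadic formulation is genuinely needed when $\abs p=1$: a set of $s$ of small measure concentrated near $s=\tfrac1{\log y}$, where $1/s$ is largest, could otherwise swallow the entire $\log\log\log X$ worth of contribution.

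For the local claim I work a hair to the right of the line: put $\sigma=1+(\log X)^{-10}$ and $Y_0=\exp((\log X)^{10})$. Using $\abs{(\zeta'/\zeta)(u+iv)}\ll(\log v)^{O(1)}$ for $1\le u\le\sigma$, $v$ large (classical zero-free region), the prime powers contributing $O(1)$, and $\sum_{p>Y_0}p^{-\sigma}=O(1)$, one gets
$$\log\abs{\zeta\p{1+i(\tau+s)}}=\Re\sum_{p\le y}p^{-\sigma-i(\tau+s)}+\Re\!\!\sum_{y<p\le Y_0}\!\!p^{-\sigma-i(\tau+s)}+O(1).$$
Now choose $\tau$ (see below) with $\bigl\|\tfrac\tau{2\pi}\log p\bigr\|<\eta$ --- respectively $\bigl\|\tfrac\tau{2\pi}\log p-\tfrac12\bigr\|<\eta$ --- for every prime $p\le y$, with $\eta\asymp1/\log\log\log X$. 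Since $p^{-\sigma-i(\tau+s)}=\pm p^{-1-is}+O(\eta/p)+O\bigl((\log p)/(p\log Y_0)\bigr)$ for $p\le y$, the first sum is $\pm\Re\sum_{p\le y}p^{-1-is}+O(1)$, and for $s\in[\tfrac1{\log y},\delta]$,
$$\Re\sum_{p\le y}p^{-1-is}=\log\abs{\zeta(1+is)}+O(1)\ \ge\ \log\tfrac1s-O(1),$$
the equality because $\sum_{p>y}p^{-1-is}\ll\tfrac1{s\log y}+o(1)=O(1)$ (a small-argument oscillatory integral plus a negligible prime number theorem error), and the inequality because $s\mapsto s\,\zeta(1+is)$ extends continuously to $[0,\delta]$ with value $-i$ at $s=0$ and never vanishes there --- this is exactly the non-vanishing of $\zeta$ on $\Re(s)=1$ --- so $\abs{\zeta(1+is)}\ge c_0/s$ on $(0,\delta]$. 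Hence $\pm\log\abs{\zeta(1+i(\tau+s))}\ge\log\tfrac1s-O(1)-\bigl|\sum_{y<p\le Y_0}p^{-\sigma-i(\tau+s)}\bigr|$, and the local claim reduces to showing the last term is $\le C$ for $s$ in a set meeting each dyadic block of $[\tfrac1{\log y},\delta]$ in at least half its measure.

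It remains to choose $\tau$. Existence of $\tau\le X$ with the stated Kronecker accuracy follows from an effective form of Kronecker's theorem (Erd{\H o}s--Tur\'an / discrepancy estimates), valid since $\pi(y)\log(1/\eta)=o(\log X)$; the same input shows the set $\mathcal R\subseteq[0,X]$ of admissible $\tau$ has measure $\gg X(2\eta)^{\pi(y)}=X^{1-o(1)}$. To bound the middle tail over $\mathcal R$ one uses a second-moment estimate: expanding the square and using that the frequencies $\tfrac1{2\pi}\log(q/p)$ with $y<p,q\le Y_0$ are generic relative to the conditions defining $\mathcal R$ (which involve only primes $\le y$), one gets the near-orthogonality bound
$$\int_{\mathcal R}\Bigl|\sum_{y<p\le Y_0}p^{-\sigma-i(\tau+s)}\Bigr|^2 d\tau\ \ll\ \operatorname{meas}(\mathcal R)\Bigl(\frac1{y\log y}+\frac{(\log X)^{O(1)}}{X}\Bigr)=o\bigl(\operatorname{meas}(\mathcal R)\bigr),$$
uniformly in $s$; integrating in $s$ over each of the $K\asymp\log\log y$ dyadic blocks and applying Chebyshev, all $\tau\in\mathcal R$ outside an exceptional set of density $o(1)$ have the tail $\le C$ on at least half of each block, and any such $\tau\in\mathcal R\cap[\sqrt X,X]$ works. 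I expect this joint construction to be the main obstacle: aligning the small-prime Euler factors (which forces the $\asymp\log\log\log X$ main term) while keeping the prime tail controlled on a substantial set of $s$, evaluated at the large argument $\tau$ where neither oscillation in $s$ nor the prime number theorem alone suffices --- together with pinning down the unconditional inputs (the zero-free-region bound and the effective Kronecker estimate) precisely enough for the choice $y=(\log X)^{1/2}$.
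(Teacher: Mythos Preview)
Your strategy is natural — align the small-prime Euler factors via Kronecker to manufacture an extremal point and then integrate the resulting $1/s$ profile — but the second-moment step is a genuine gap. You claim
\[
\frac{1}{\operatorname{meas}(\mathcal R)}\int_{\mathcal R}\Bigl|\sum_{y<p\le Y_0}p^{-\sigma-i(\tau+s)}\Bigr|^2 d\tau\ \ll\ \frac1{y\log y}+\frac{(\log X)^{O(1)}}{X},
\]
appealing to ``near-orthogonality'' because the tail frequencies $\log(q/p)$ are generic relative to the small-prime conditions defining $\mathcal R$. That heuristic is not a proof. The only bound you have for free is the trivial one $\int_{\mathcal R}|\cdot|^2\le\int_0^X|\cdot|^2$, and Montgomery--Vaughan gives $\int_0^X|\cdot|^2\ll X/(y\log y)+O(\log\log X)$. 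With your parameters $y=(\log X)^{1/2}$ and $\eta\asymp(\log\log\log X)^{-1}$ one has $\operatorname{meas}(\mathcal R)=X(2\eta)^{\pi(y)}=X\exp\bigl(-c\,(\log X)^{1/2}(\log\log X)^{-1}\log\log\log\log X\bigr)$, so the resulting average over $\mathcal R$ is $\gg\exp\bigl((\log X)^{1/2-o(1)}\bigr)$, not $o(1)$; Chebyshev therefore cannot force the tail to be $O(1)$ on a positive proportion of $\mathcal R$. Turning the genericity heuristic into a proof would require quantitative control of $\int_{\mathcal R}(q/p)^{i\tau}d\tau$ for \emph{all} prime pairs $y<p\ne q\le Y_0$, i.e.\ effective equidistribution of $(\tau\log p_1,\dots,\tau\log p_k,\tau\log(q/p))$ with explicit dependence on the heights --- essentially a Baker-type input --- and with $Y_0=\exp((\log X)^{10})\gg X$ this is far from routine. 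No rearrangement of $y$ and $\eta$ rescues the trivial bound: you need $\log y\asymp\log\log X$ to obtain $K\asymp\log\log\log X$ dyadic blocks, which forces $\pi(y)\asymp(\log X)^{c}/\log\log X$ and hence $\operatorname{meas}(\mathcal R)/X\to 0$ super-polynomially.

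The paper sidesteps this tail problem completely by a device you may find instructive. Rather than truncating the prime sum and estimating the remainder, it convolves $\zeta(1+it)^{\pm p}$ with a kernel $\hat\theta_n(x)=(\sin\pi x/\pi x)^{2n}$ whose \emph{Fourier transform} $\theta_n$ has compact support $[-n,n]$. The convolved object $\zeta_{n,N}^{p}(s)$ (respectively $Z_{n,N}^{p}(s)$) is then a genuine Dirichlet \emph{polynomial} in the integers $j\le\exp(nN)$, with no tail at all. Now Dirichlet's approximation theorem (for the $+$ sign) or the effective Bohr--Landau form of Kronecker (for the $-$ sign) is applied to \emph{every} prime $\le\exp(nN)$ simultaneously, yielding $T_N$ with $\zeta_{n,N}^{p}(1+iT_N+it)$ uniformly close to $\zeta_{n,N}^{p}(1+it)$; the latter integral over $[0,\delta]$ is read off directly from the Laurent pole, giving \eqref{ire}. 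The compact Fourier support is exactly what lets one Kronecker-approximate the whole series at once and avoid your second-moment obstacle.
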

We would like to remark that Theorem \ref{thm8} with $p=2$  answers a question of Weber \cite[Problem 6.4]{Weber} in the case $\sigma=1$ in the affirmative\footnote{Ram\=ununas Garunk\v{s}tis remarked that the case $1/2<\sigma<1$ in Weber's problem follows as a direct consequence of the Voronin universality theorem.}. 

 \begin{proof} Again we are going to use a convolution.  Since it is more convenient to have the compact support on the sum side we will consider convolution by the Fourier transform of the triangular function (and higher order convolutions of the triangular function). Define recursively
 \begin{align}
    \theta_1(x)&=\theta(x) \\
  \theta_n(x)&=(\theta_{n- 1}* \theta)(x)= \int_{-\infty}^\infty \theta_{n-1}(t) \theta(x-t) dt
  \end{align}
  It is clear that $0 \leq \theta_n(x) \leq 1$ is a continuous function with support on $[-n,n]$  and by \eqref{ft2} it is clear that its Fourier transform satisfies
    \begin{gather} \label{ere}
     \hat \theta_n(t) = (\hat \theta(t))^n = \p{\frac{\sin \pi t}{ \pi t}}^{2n}.
    \end{gather}
 Consider 
  \begin{align} \label{io2} \zeta_{n,N}^{p}(s)&:=  \int_{-\infty}^\infty \p{\zeta\p{s+i\frac{x} N}}^p \hat \theta_n (x) dx, \qquad \Re(s)>1 \\ \intertext{and}
    \label{io3} Z_{n,N}^{p}(s)&:=  \int_{-\infty}^\infty
    \p{\frac {\zeta\p{2s+2i\frac{x} N}} {\zeta\p{s+i\frac{x} N}} }^{p} \hat \theta_n (x) dx, \qquad \Re(s)>1
    \end{align} 
    where the functions are defined by continuous extension when $\Re(s)=1$ and 
 where $\hat \theta_n(x)$ is given by \eqref{ere}.   In particular $ \zeta_{n,N}^{p}$ is a smoothed version of the $p$'th power of usual Riemann zeta-function. From now on choose $n>p/2$ to be an integer. 
It follows from the convolution \eqref{io2} and the Laurent expansion of the zeta-function at $s=1$\footnote{The integrals  \eqref{io2} and \eqref{io3} should here be interpreted as the limit when $s \to 1^+$.} that 
\begin{gather*} 
   \abs{\zeta_{n,N}^{p}(1+it)} = t^{-p}  \p{1+O(t)+O((Nt)^{p-2n})},    \qquad (N^{-1} \leq t \leq 1).
 \end{gather*}
 Thus, since $p<2n$, by calculus, we have  for fixed $\delta>0$ and $p \geq 1$ that
\begin{gather} \int_0^{\delta} \abs{\zeta_{n,N}^{p}(1+it)} dt \gg \begin{cases} \log N+O(1), & p=1, \\ N^{p-1}, & p>1. \end{cases} \label{ire}
 \end{gather}
By \eqref{io2}, \eqref{io3} and \eqref{thetaref} we have the Dirichlet series expansions
 \begin{gather} 
   \zeta_{n,N}^{p}(s)=\sum_{j=1}^{\lfloor  \exp(nN) \rfloor} \frac {d_{p}(j)} {j^s} \theta_n \p{\frac{\log j} N}, \label{oj1} \\ \intertext{and}
   Z_{n,N}^{p}(s)=\sum_{j=1}^{\lfloor  \exp(nN) \rfloor} \frac {d_{p}(j)\lambda(j)} {j^s} \theta_n \p{\frac{\log j} N}, \label{oj2}
   \end{gather}
By \eqref{ere}, \eqref{io2}, \eqref{io3} and the triangle inequality it follows for $T \geq T_0$ sufficiently large and $N \geq 1$ that
\begin{gather} \label{aj}
  0.1 \int_{T}^{T+\delta} \abs{\zeta_{n,N}^{p}(1+it)} dt \leq  \max_{T/2 \leq X \leq 2T} \int_{X}^{X+\delta} \abs{\zeta(1+it)}^{p} dt, \\ \intertext{and} \label{ajajaj}
  0.1 \int_{T}^{T+\delta} \abs{Z_{n,N}^{p}(1+it)} dt \leq  \max_{T/2 \leq X \leq 2T} \int_{X}^{X+\delta} \abs{\frac{\zeta(2+2it)} {\zeta(1+it)}}^{p} dt.
\end{gather}
Thus it is sufficient to bound the left hand side of \eqref{aj}. By Dirichlet's approximation theorem there exists for each $N > 0$ some \begin{gather} \label{TNineq} 0 \leq T_N \leq N^{4 \pi(\exp(nN))}, \end{gather} where $\pi(\exp(nN))$ here denote the number of primes less than $\exp(nN)$,  
such that
\begin{gather} \label{io}
 \operatorname{dist}\p{\frac{T_N \log P}{2 \pi}, \mathbb{Z}}<N^{-4}, \qquad (P \text{ prime}, 2 \leq P \leq \exp(nN)).
\end{gather}
It follows from \eqref{io} that\footnote{The worst case is when $n$ is a power of 2}
\begin{gather}
  \abs{j^{i{T_N}}-1}<N^{-2}, \qquad (1 \leq j \leq \exp(nN))
\end{gather}
For such a $T_N$ it follows from \eqref{oj1} that 
\begin{gather} \label{aj99a}
  \abs{\zeta_{n,N}^p(1+iT_N+it)-\zeta_{n,N}^p(1+it)} \leq N^{-1}, \qquad (t \in \R)
\end{gather}
 By noticing that it follows from $\eqref{TNineq}$ that $\log \log T_N \ll N$ and   the inequalities \eqref{ire}, \eqref{aj} and \eqref{aj99a}  it follows that
\begin{gather*} 
     \max_{T_N/2 \leq X \leq 2T_N} \int_{X}^{X+\delta} \abs{\zeta(1+it)}^{p} dt  \gg \begin{cases} \log \log \log T_N, & p=1, \\  (\log \log T_N)^{p-1}, & p>1, \end{cases}
\end{gather*}
which gives our result for $p \geq 1$.
For negative moments $p \leq -1$  we need some corresponding result for $Z_{n,N}^p(s)$. It is not sufficient to use the Dirichlet approximation theorem directly and  we need some effective variant of the Kronecker approximation theorem. Bohr and Landau \cite{BohrLandau1,BohrLandau3}\footnote{see  also the discussion in \cite[p.182]{Steuding}} proved that
\begin{gather}
  \abs{P^{iT}+1}<\frac 1 M, \qquad (1 \leq P \leq M, \, P \text{ prime}),
\end{gather}
holds for some $0<T<\exp(M^6)$. By using $M=\lfloor \exp(nN) \rfloor$ and $T_N=T$ it follows from \eqref{oj2} that
\begin{gather} \label{aj99b}
  \abs{Z_{n,N}^p(1+iT_N+it)-\zeta_{n,N}^{-p}(1+it)} \leq 1, \qquad (t \in \R)
\end{gather}
holds for some $0 \leq  T_N \leq \exp(\exp(6nN))$. Thus by combining \eqref{ire}, \eqref{ajajaj}, \eqref{TNineq}, \eqref{line2} and \eqref{aj99b} 
it follows that
\begin{gather*} 
\max_{T_N/2 \leq X \leq 2T_N} \int_{X}^{X+\delta} \abs{\zeta(1+it)}^{-p} dt \gg \begin{cases} \log \log \log T_N, & p=1, \\ (\log \log T_N)^{p-1}, & p>1. \end{cases} 
 \end{gather*}
\end{proof}

\begin{ack}
  I would like to thank Ram\=ununas Garunk\v{s}tis who in April 2020 asked about some related problems  that gave me motivation to finish this manuscript\footnote{Most of the results of this paper were developed in 2011-2013 in an unfinished manuscript.}.
\end{ack}

\section{Proof of Theorem \ref{thm2}}

Theorem \ref{thm2} follows from Theorem \ref{thm3} and Theorem \ref{thm8}. \qed

\def\cprime{$'$} \def\polhk#1{\setbox0=\hbox{#1}{\ooalign{\hidewidth
  \lower1.5ex\hbox{`}\hidewidth\crcr\unhbox0}}}

\bibliographystyle{plain}

\def\cprime{$'$} \def\polhk#1{\setbox0=\hbox{#1}{\ooalign{\hidewidth
  \lower1.5ex\hbox{`}\hidewidth\crcr\unhbox0}}}

\end{document}